\newtheorem{lemma}{lemma}[section]
\newtheorem{remark}[lemma]{Remark}
\newtheorem{theorem}[lemma]{Theorem}
\newtheorem{definition}[lemma]{Definition}
\newtheorem*{remark*}{Remark}
\def\acts{\mathrel{\reflectbox{$\righttoleftarrow$}}}
\begin{document}
\title {\bf{A Family of Quasimorphism Constructions}}
\author{Gabi Ben Simon
\\
ETH-Z\"urich\\
 gabi.ben.simon@math.ethz.ch} \maketitle

\begin{abstract}

 In this work  we present a principle  which says that quasimorphisms can be obtained via "local data" of the group action on certain appropriate spaces. In a rough manner the principle says that instead of starting with a given group and try  to build or study its space of quasimorphisms,  we should start with a space with a certain structure, in such a way that groups acting on this space and respect this structure will automatically carry quasimorphisms,  where these are suppose to be better understood.  In this paper we suggest such a family of spaces and give demonstrating examples  for countable groups, groups that relates to action on the circle as well as outline construction for diffeomorphism groups. A distinctive advantage of this principle is that it allows the construction of the quasimorphism in a quite direct way.  Further,  we prove a lemma which besides serving as a platform for the construction of quasimorphisms on countable groups, bare interest by itself. Since it provides us with an embedding of any given countable group as a group of quasi-isometries of a universal space,  where this space of embeddings is in bijection with the projective space of the homogeneous quasimorphism space of the group.

\end{abstract}

 \section{Introduction}
 
 Given a group, $G$,  a \textit{quasimorphism} on the group, $\mu$,  is a function to $\mathbb{R}$ which satisfies  
 
 $$|\mu(xy)-\mu(x)-\mu(y)| \leq B$$ for all $x, y \in G$
 and a universal $B$. The homogenization of $\mu$, $\mu^{h}(g):= \lim \limits_{n \rightarrow \infty} \frac{\mu(g^{n})}{n}$, 
 where the limit exists, is a qusimorphism with bounded difference from $\mu$. Further $\mu^h$,  is a homogeneous function which means $\mu^{h}(g^n)=n \mu^{h}(g)$ for every integer $n$.

 From the point of view of the author's interest  there are two main sources of study of this notion. The one that comes from the attempt to construct quasimorphisms on diffeomorphisms group, of special interest symplectomorphisms and Hamiltonian groups  (see for example \cite{BS}, \cite{EnP}, \cite{Py}, \cite{Shel}).  Where the other source comes from the study of Lie groups and,  with and without relation,  countable groups. So the groups studied are, for example, universal covers of hermitian Lie groups and word hyperbolic groups   (see for example \cite{BS-H3}, \cite{BS-H1}, \cite{BM}, \cite{C},  \cite{CF}, \cite{BIW}, \cite{EP} and \cite{calegari} as a general reference).  In this work  we want to report a feature, which appears sometimes  indirectly,  which says that quasimorphisms can be obtained via "local data" of the group action on certain appropriate spaces. This feature appears in both families mentioned above.  In a rough manner the principle says that instead of starting with a given group and try  to build or study its space of quasimorphisms,  we should start with a space with a certain structure, in such a way that groups acting on this space and respect this structure will automatically carry quasimorphisms, where these should be quite understood. In this paper we suggest such a family of spaces and give demonstrating examples  for countable groups, groups that relates to action on the circle as well as outline construction for diffeomorphism groups, see section 3. A distinctive advantage of this principle is that it allows the construction of the quasimorphism in a quite direct way.  Further, see subsection \eqref{ladder}, we prove a lemma which besides serving as a platform for the construction of quasimorphisms on countable groups, bare interest by itself. Since it provides us with an embedding of the countable group as a group of quasi-isometries of a universal space,  where this space of embeddings is in bijection with the projective space of the homogeneous quasimorphism space of the group. We see this paper as a first step, in developing the picture that emerge from it.
 
 We should remark that the idea to look at quasimorphisms  from the point of view of group action on spaces with certain, appropriate, structure has started in \cite{BS-H4}. Nevertheless the focus there was completely different. Indeed the focus was on a systematic study of the relation of  quasimorphisms and the notion of relative growth and order structures on groups,  see \cite{BS-H4}.

\textbf{Acknowledgements:} 

Many thanks to Danny Calegari for his willingness to host me at Cambridge University given  a very short notice. The discussion with him was important for this work. Many thanks to Tobias Hartnick for reading a draft of the paper and for his remarks. Many thanks to Andreas Leiser for the help with the drawings. Many thanks to Leonid Polterovich for his important remarks about the preliminary version. The remarks of Dietmar Salamon about the diffeomorphism group helped me to focus my intension and ideas I thank him very much for that. Finally I am grateful to the Departement Mathematik of ETH Z\"urich for the support  during this academic year, and in particular to  Paul Biran and Dietmar Salamon.

\newpage
\section{Main Principle}

The starting point is the following simple fact that can be extracted from $ \cite {BS-H4}$.

 \begin{lemma}\label{basiclemma}  Assume that   $X$   is a space such that there exists a function $h: X \rightarrow  \mathbb{R} $ and a group action 
 $G \acts X$ such that for any $ x,y \in X, g \in G $ we have: 

\begin{equation} \label{root condition}  |(h(g \cdot x)-h(g \cdot y))-(h(x)-h(y))| \leq B  \end{equation}

for some universal bound  B .

Then the function   $ \mu (g)= h(g \cdot a)- h(a) $  is a  quasimorphism,  where  $\mu$ does not depend on the choice of  $a \in X $ up to a bounded error.
Further, if the action is effective and is not bounded in the sense that  
  $  \lim \limits_{n \to \pm \infty} h(g^n \cdot a) = \pm \infty $ for some $g$,  then $\mu ^h$, the homogenization of $ \mu$ is a non zero homogeneous quasimorphism. 
  
\end{lemma}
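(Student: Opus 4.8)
The plan is to verify both assertions by reading the ``root condition'' \eqref{root condition} in two different ways, and then to invoke the standard asymptotics of quasimorphisms for the non-triviality statement.

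First I would prove that $\mu(g) = h(g\cdot a) - h(a)$ is a quasimorphism by a direct computation of its defect. Expanding and using $gh\cdot a = g\cdot(h\cdot a)$,
\[
\mu(gh) - \mu(g) - \mu(h) = \bigl(h(gh\cdot a) - h(g\cdot a)\bigr) - \bigl(h(h\cdot a) - h(a)\bigr).
\]
This is precisely the left-hand side of \eqref{root condition} applied to the points $x = h\cdot a$, $y = a$ and the element $g$, so $|\mu(gh) - \mu(g) - \mu(h)| \le B$ and $\mu$ is a quasimorphism of defect at most $B$. The same device handles the independence of the base point: for two choices $a, a'$ one has $\mu_a(g) - \mu_{a'}(g) = \bigl(h(g\cdot a) - h(g\cdot a')\bigr) - \bigl(h(a) - h(a')\bigr)$, which is \eqref{root condition} for $x = a$, $y = a'$, giving $|\mu_a(g) - \mu_{a'}(g)| \le B$.

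For the second assertion I would first recall the two classical facts about a quasimorphism $\mu$ of defect $D$: the limit defining $\mu^h$ exists, and one has the a priori bound $|\mu(g^n) - n\,\mu^h(g)| \le D$ for every $n$ (obtained by comparing $\mu(g^{nk})$ with $k\,\mu(g^n)$, dividing by $nk$, and letting $k \to \infty$). The consequence I want is the contrapositive: if $\mu^h(g) = 0$ then $|\mu(g^n)| \le D$ for all $n$, i.e.\ the orbit value $\mu(g^n) = h(g^n\cdot a) - h(a)$ stays bounded. Now I invoke the unboundedness hypothesis: for the distinguished $g$ we have $h(g^n\cdot a) \to +\infty$, so $\mu(g^n) \to +\infty$ is certainly unbounded, forcing $\mu^h(g) \ne 0$. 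Hence $\mu^h$ is a non-zero homogeneous quasimorphism.

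The computations above are essentially forced, so the only genuine content --- and the step I would treat most carefully --- is the a priori estimate $|\mu(g^n) - n\,\mu^h(g)| \le D$ together with the existence of the homogenization; these are standard (see \cite{calegari}) but they are exactly what converts ``the orbit of $h$ escapes to infinity'' into ``$\mu^h$ does not vanish.'' I would also remark that effectiveness of the action is not strictly needed for the non-vanishing of $\mu^h$: the unboundedness condition already excludes $g = e$ (a trivial element has constant orbit) and by itself produces the unbounded sequence $\mu(g^n)$; effectiveness is best regarded as a standing hypothesis ensuring that the resulting quasimorphism genuinely records the $G$-action.
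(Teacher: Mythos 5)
The paper does not actually prove Lemma \ref{basiclemma} --- it is stated as a fact extracted from \cite{BS-H4} --- so there is no in-text argument to compare against; your proof is correct and complete, and it is the standard argument one would expect: the defect of $\mu$ and the base-point independence are both literal instances of \eqref{root condition}, and non-vanishing of $\mu^h$ follows from the bound $|\mu(g^n)-n\mu^h(g)|\le D$ combined with the unboundedness of $n\mapsto h(g^n\cdot a)$. Your observation that effectiveness of the action is not actually needed for the non-vanishing conclusion is also correct. One cosmetic caution: you use the letter $h$ simultaneously for the function $h:X\to\mathbb{R}$ and for a group element in $\mu(gh)$; rename the group element to avoid expressions like $h(h\cdot a)$ meaning two different things.
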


 \begin{remark}:\label{br} Actually, as it shown in \cite{BS-H4}  every homogeneous quasimorphism can be obtained in this way by simply choosing $X=G$ and $h$ to be the given  quasimorphism on $G$.
 \end{remark}

 The lemma suggests a usage of the "inverse ideology", meaning: start with a space $h: X \rightarrow \mathbb{R} $ and try to find a group action $G \acts X $ which satisfies condition  \eqref{root condition}  above. The paper suggests one possible answer to this problem.
 
 We now give a setup which will lead to examples.
 
 Let $X$ be a space with no special structure. And assume that a group $ A $ acts on $X$ such that \footnote{Choosing transparency over conciseness, we choose to skip what might be a more concise formulation, using standard terminology, of the axioms.}

\bigskip

$ \begin{cases}\label{mainsetup}
  X=   \coprod \limits_{ \alpha \in A } F_{ \alpha } & (X  \text{ is a union of "fundamental domains"})\\
  \alpha : F_{ \mathbbm{1}} \rightarrow F_{ \alpha }, & \forall \alpha \in A \\
   h: X \rightarrow \mathbb{R} , & \text{ s.t. }  Im(h(F_{\mathbbm {1}})) \subseteq [0,1), \\
    h( \alpha (x))=h(x)+ \rho (\alpha)+b(x, \alpha), & \forall x \in F_{ \mathbbm{1}}, \forall \alpha \in A
  \end{cases}    $

\noindent where the restriction of $\alpha$ above is a bijection, $h$ a function on $X$,  $b: F_{ \mathbbm{1}} \times A \rightarrow \mathbb{R}$ is a bounded function and,  $\rho$ an unbounded homomorphism to $\mathbb{Z}$.

 
 
 
\begin{definition}\label{triple} We define $X$,  $h$ and $A$ compatible as above as a triple, and denote it by $( X, h , A )$. We do not include $b$ and $\rho$ in the notation since in all that follows they will not be used directly.
\end{definition}
 
\begin{theorem}\label{maintheorem} Assume that $G$ acts on the space $( X, h, A )$ such that the action of $G$ commutes with the action of $A$ and, $h(g(F_{ \mathbbm{1}})) \subseteq [r, r+C_{0}]$ for all $g \in G$, and  
  for universal constant $C_{0}$,  and some $r$ which depends on $g$. For example we can choose $ r:= \inf(Im(h(g(F_{}))) $.
 Then $\mu$ of Lemma \ref{basiclemma} defines a  quasimorphism.   
 \end{theorem}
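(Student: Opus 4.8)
The plan is to verify the hypothesis of Lemma \ref{basiclemma}, namely the root condition \eqref{root condition}, for the $G$-action on $X$; once this is in hand the conclusion that $\mu(g) = h(g\cdot a) - h(a)$ is a quasimorphism is immediate from that lemma. The heart of the matter is to show that for every fixed $g \in G$ the quantity $h(g\cdot x) - h(x)$ is, up to a universally bounded error, independent of $x \in X$; writing it as $r(g) + E(g,x)$ with $|E(g,x)| \le B_0$ for a universal $B_0$, the root condition follows at once since
\[
(h(g\cdot x) - h(g\cdot y)) - (h(x)-h(y)) = E(g,x) - E(g,y),
\]
whose absolute value is at most $2B_0$, the offending term $r(g)$ cancelling.

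To produce this estimate I would exploit the fundamental-domain decomposition together with the commutation relation. Fix $g \in G$ and $x \in X$. Since $X = \coprod_{\alpha} F_\alpha$, there is a unique $\alpha \in A$ and $x_0 \in F_{\mathbbm 1}$ with $x = \alpha(x_0)$; by the last line of the setup, $h(x) = h(x_0) + \rho(\alpha) + b(x_0,\alpha)$. Because the $G$-action commutes with the $A$-action, $g\cdot x = g\cdot \alpha(x_0) = \alpha(g\cdot x_0)$. Now $g\cdot x_0$ again lies in a unique fundamental domain, say $g\cdot x_0 = \beta(y_0)$ with $\beta \in A$ and $y_0 \in F_{\mathbbm 1}$, so that $g\cdot x = \alpha(\beta(y_0)) = (\alpha\beta)(y_0) \in F_{\alpha\beta}$. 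Applying the transformation law for $h$ twice and using that $\rho$ is a homomorphism, and substituting $\rho(\beta) = h(g\cdot x_0) - h(y_0) - b(y_0,\beta)$ coming from $h(g\cdot x_0) = h(\beta(y_0))$, gives
\[
h(g\cdot x) - h(x) = h(g\cdot x_0) - h(x_0) - b(y_0,\beta) + b(y_0,\alpha\beta) - b(x_0,\alpha).
\]

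The decisive step is to control the term $h(g\cdot x_0)$. Since $x_0 \in F_{\mathbbm 1}$, we have $g\cdot x_0 \in g(F_{\mathbbm 1})$, and the standing hypothesis $h(g(F_{\mathbbm 1})) \subseteq [r, r+C_0]$ shows that $h(g\cdot x_0)$ equals $r = r(g)$ up to the universal additive error $C_0$. Combining this with $h(x_0) \in [0,1)$ and the boundedness of $b$, the displayed expression for $h(g\cdot x) - h(x)$ is exactly of the form $r(g) + E(g,x)$ with $E$ bounded by a universal constant $B_0$ (of size $C_0 + 1 + 3\sup|b|$), independent of $g$ and $x$. I expect the main obstacle to be bookkeeping rather than conceptual: one must track which fundamental domain $\beta$ (depending on both $g$ and $x$) the point $g\cdot x_0$ falls into, and observe that although $\rho(\beta)$ is a priori unbounded, it is pinned down to $r(g) + O(1)$ precisely by the boundedness hypothesis $h(g(F_{\mathbbm 1})) \subseteq [r, r+C_0]$. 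This is the only place that hypothesis enters, and it is what forces the $x$-dependence to collapse. With $E$ so bounded, the cancellation displayed in the first paragraph yields \eqref{root condition}, and Lemma \ref{basiclemma} finishes the proof.
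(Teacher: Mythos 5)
Your proof is correct and rests on exactly the same ingredients as the paper's: the commutation of the $G$- and $A$-actions, the transformation law $h(\alpha(x))=h(x)+\rho(\alpha)+b(x,\alpha)$ with its bounded error, and the hypothesis $h(g(F_{\mathbbm{1}}))\subseteq[r,r+C_0]$, so it is essentially the same argument. The only difference is organizational: you pull every point back to $F_{\mathbbm{1}}$ and prove the pointwise displacement estimate $h(g\cdot x)-h(x)=r(g)+O(1)$, whereas the paper telescopes between $x$ and $y$ via the element $\beta\alpha^{-1}$ and compares points lying in a common fundamental domain and its $g$-image; both hinge on the same cancellation of the unbounded term $\rho$.
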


\begin{remark}\label{ac}
 
 1) The assumption that $A$ and $G$ need to commute can be relaxed. Instead we can demand that
 the groups actions will \textit{ almost commute}, which means that for all $\alpha \in A$ and $ g \in G$ we have that $| h(\alpha \circ g\cdot x_{0})-h(g \circ \alpha\cdot x_{0})|$ is universally bounded independently of $x_{0}$. The proof goes exactly the same.

 2) Note that due to the facts that $X$ is "tiled" by images of $F_{ \mathbbm{1}}$ under the action of $A$, and $A$ commutes with $G$, it is enough to construct, or define, the image of  $F_{ \mathbbm{1}}$ under
 the restriction of the action of $G$ to $F_{ \mathbbm{1}}$.
 
 \end{remark}

 \begin{proof} The proof is made, essentially,  of three simple facts: The first thing to note is that for every $\gamma \in A$ we have that $h(F_{\gamma}) \subseteq [r, r+M_{0}]$ for some $r$ and the universal constant $M_{0}$ which bounds $b$. Indeed we have $h(F_{\gamma})=h(\gamma (F_{\mathbbm{1}}))$ and it follows from the last axiom that \begin{equation}\label{p1} h( \gamma \cdot x)=h(x)+ \rho (\gamma)+b(x, \gamma) \end{equation} so the claim follows since $b$ is bounded and of course the bounds do not depend on $\gamma$. The second thing to note is that it follows from the axioms that for all $g \in G,\text{ } \gamma \in A, \text{ } x \in X$ we have that \begin{equation}\label{p2} |h(g \circ \gamma \cdot x)-(h(g\cdot x)+ \rho (\gamma)+b)|\end{equation}  is universally bounded (actually by $2M_{0}$ ) where we use \eqref{p1} to see it. 
 The third thing to notice is that it follows from the assumption that $(h(g(F_{ \mathbbm{1}})) \subseteq [r, r+C_{0}]$
and the fact that the actions of $A$ and $G$ commute,  that for all $\alpha \in A$ we have \begin{equation}\label{p3} (h(g(F_{ \alpha})) \subseteq [r, r+C_{0}]\end{equation} for some $r$.

 So now let $x,y \in X \text {and }g\in G$. Further assume that $x \in F_{\alpha}$ and $y \in F_{\beta}$ so of course we have $\beta\circ \alpha^{-1} \cdot x \in F_{\beta}$. We now estimate

  $$ |(h(g \cdot x)-h(g \cdot y))-(h(x)-h(y))|=$$
 $$ |(h(g \cdot x)-h(g\circ \beta \alpha^{-1}\cdot x)+
 h(g\circ \beta \alpha^{-1}\cdot x)
  -h(g \cdot y)+ h(\beta \alpha^{-1}\cdot x)-h(x)+h(y)- h( \beta \alpha^{-1}\cdot x)| $$
 
 \begin{align}\label{b1} \leq & |(h(g \cdot x)-h(g\circ \beta \alpha^{-1}\cdot x) +h(g\circ \beta \alpha^{-1}\cdot x)\\\label{b2} & \nonumber-h(g \cdot y)+  h(\beta \alpha^{-1}\cdot x)-h(x)| \\  &+ |h(y)- h( \beta \alpha^{-1}\cdot x)| \leq 
  \end{align}

\begin{align}  \label{t1} & |(h(g \cdot x)-(h(g \cdot x)+\rho( \beta \alpha^{-1})+b) +h(g\circ \beta \alpha^{-1}\cdot x)\\  \label{t2} & \nonumber-h(g \cdot y)+  (h(x)+\rho( \beta \alpha^{-1})+b)-h(x)| \\  &+ |h(y)- h( \beta \alpha^{-1}\cdot x)|+2M_{0} \leq\\ \label{t3} & | h(g\circ \beta \alpha^{-1}\cdot x)-h(g \cdot y) | \\  \label{t4} &+ |h(y)- h( \beta \alpha^{-1}\cdot x)|+4M_{0} \leq 4M_{0}+1+C_{0}
\end{align}

The transition from \eqref{b1} and \eqref{b2}   to \eqref{t1} and \eqref{t2} comes from the second and the first properties above (which come from  \eqref{p2} and  \eqref{p1} respectively). Further, the transition to \eqref{t3} and \eqref{t4} comes from the fact that $b$ is globally bounded and from the third property above \eqref{p3}. This concludes the proof.

 
 \end{proof}
 
 \section{Examples demonstrating the main principle}
 
 \textit{From now on we set:  $A= \mathbb{Z} $ and $ \rho = id$, unless otherwise stated. We use the same notation and conventions as above.}

We now move to the next family. Using theorem \eqref{maintheorem} with an eye to countable groups.

\subsection{Family 1: Countable Groups}

\subsubsection{Example}

It follows almost directly from the results of \cite{BS-H4} that the Rademacher quasimorphism 
on $PSL_{2}( \mathbb{Z})$ fits into the scheme  of Theorem \eqref{maintheorem}. Here again we have $A = \mathbb{Z} $ and $X$ is a countable 
subsets of $ \mathbb{R}^2 $ and $h: X \rightarrow \mathbb{R}$. Further, we have an action of $ \mathbb{Z}$ on $X$ which almost  commutes (see Remark\eqref{ac}) with the action of $PSL_{2}( \mathbb{Z})$ on $X \subseteq \mathbb{R}^2$. The resulting quasimorphism, as we said, is a Rademacher  quasimorphism. The details are as follows, we repeat the data of \cite{BS-H4} in which the full details of the construction appear. Recall that $PSL_{2}( \mathbb{Z}) \cong \mathbb{Z}_2 \ast  \mathbb{Z}_3$. Under this isomorphism we denote by $S$ and $R$ the generators of $\mathbb{Z}_2$ and $ \mathbb{Z}_3$ respectively. In figure \ref{fig2} below we demonstrate how the group acts on a subset,  $X$ (where of course only part of it appears in the figure), of the plane where the action is obvious from the figure. Here $h$ increases in the horizontal direction- again see the figure. Lastly,  between the two dotted lines we have one fundamental domain of the action.

\begin{figure}[h!t]
\begin{center}
\includegraphics[width=2.5in]{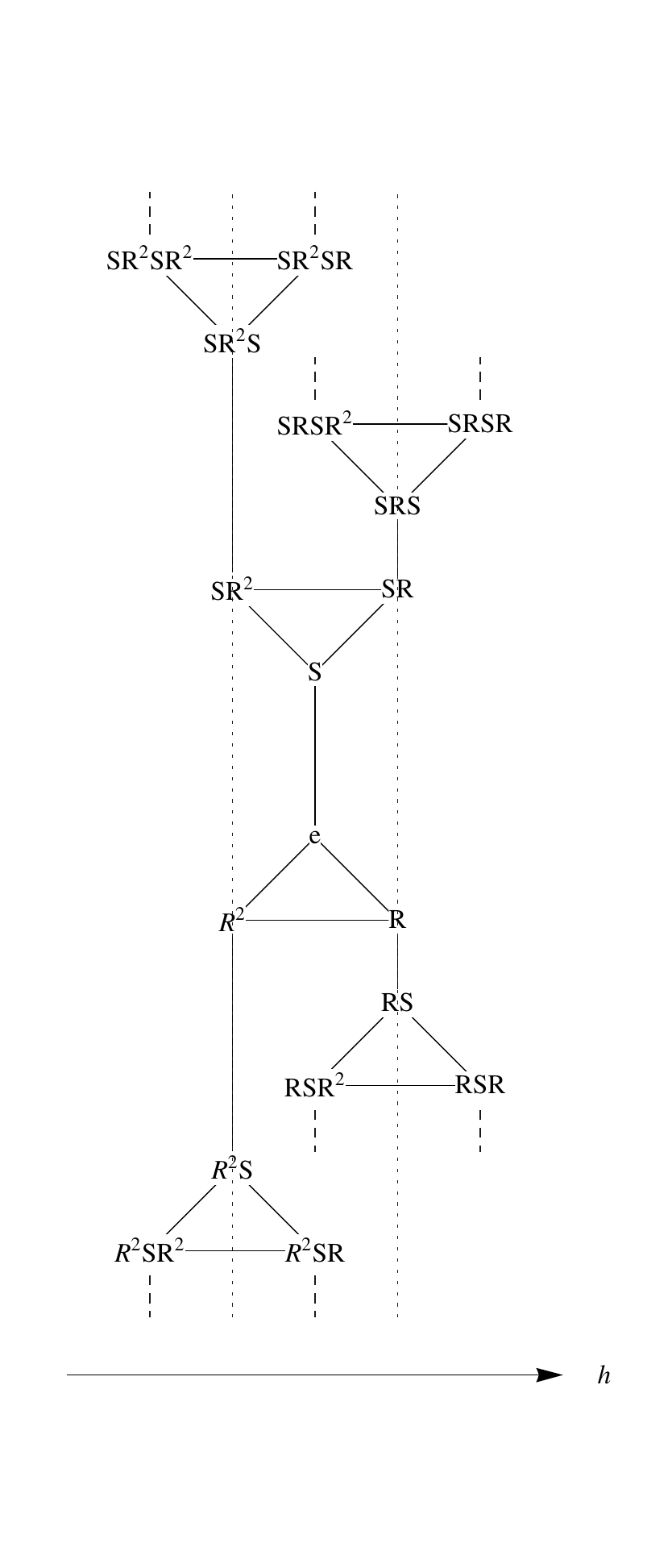}\caption{The $PSL_2(\mathbb{Z}$) example}\label{fig2}
\end{center}
\end{figure}
\newpage

\subsubsection{Universal embedding for Countable Groups with nonzero quasimorphism}\label{ladder}

 Further, the following set up, culminating in the lemma below,  also serve as a platform for building examples,  by applying Theorem \eqref{maintheorem}.

Consider a discrete countable set on the open interval $(0,1)$ denoted by $H$. And consider the "ladder" set $ \mathcal{L}:= H\times \mathbb{Z} \subseteq \mathbb{R}^{2}.$ Define a metric on $\mathcal{L}$ as $d=d_{1}+d_{2}$ where the $d_{i}$ are the usual induced metrics on $H$ and $\mathbb{Z}$ respectively. So we have a metric space $(\mathcal{L}, d)$. Lets denote by $h: \mathcal{L} \rightarrow \mathbb{Z}$ the projection to the $\mathbb{Z}$ component. Finally, we denote by $QI^{h}(\mathcal{L},d)$ the space of all quasi-isometries of  $(\mathcal{L},d)$ which respect condition \eqref{root condition} where $g$, in \eqref{root condition} stands for quasi-isometry and $B$ depends on $g$.

Now let $G$ be any countable group for which the space of nonzero homogenous quasimorphism is not empty. Then we have: 

\begin{lemma}\label{qil}For a given homogeneous quasimorphism on $G$, say $\mu$, there is an injection  of $G$  induced by $\mu$, into $QI^{h}(\mathcal{L},d)$. Further,  if we denote the action of the image of $G$ on $\mathcal{L}$, induced by $\mu$,  by $\Psi^{\mu}$  and two representations are considered to be equivalent if the difference,  with respect to $h$, between the orbits of their action, for any point in $\mathcal{L}$,  is universally bounded. Then, there is an injection of the projective space of the homogenous quasi-morphisms space into the equivalence classes space of the representations. In other words if $\mu$ and $m$ are linearly independent then $[\Psi^{\mu}] \neq [\Psi^{m}]$, where the brackets stand for equivalence classes.

\end{lemma}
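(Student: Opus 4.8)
The plan is to realize the homogeneous quasimorphism $\mu$ as a genuine action of $G$ on the ladder $\mathcal{L}=H\times\mathbb{Z}$ in which the first coordinate records the group element (guaranteeing injectivity) and the second coordinate is translated by an integer cocycle built from $\mu$ (producing the quasimorphism-type control). First I would fix an injection $\iota\colon G\hookrightarrow H$ with $\iota(G)$ invariant under the transported left regular representation $\lambda_g(\iota(g'))=\iota(gg')$, extending trivially on $(H\setminus\iota(G))\times\mathbb{Z}$; here one uses that $G$ is (countably) infinite, which is forced by the existence of a nonzero homogeneous quasimorphism, together with $|H|\ge|G|$. Writing $D=D(\mu)$ for the defect of $\mu$, I would set $c_\mu(g,g')=\lfloor\mu(gg')\rfloor-\lfloor\mu(g')\rfloor$ and define $\Psi^\mu(g)(\iota(g'),n)=(\iota(gg'),\,n+c_\mu(g,g'))$. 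The first thing to record is that $c_\mu$ is an \emph{exact} $\mathbb{Z}$-valued cocycle, i.e. $c_\mu(g_1g_2,g')=c_\mu(g_1,g_2g')+c_\mu(g_2,g')$ — immediate, since the floor terms telescope — so that $\Psi^\mu$ is a genuine homomorphism into the permutations of $\mathcal{L}$, and not merely a homomorphism up to bounded error.

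Next I would verify that each $\Psi^\mu(g)$ lands in $QI^h(\mathcal{L},d)$. The key structural observation is that $H\subseteq(0,1)$, so $(H,d_1)$ has diameter $<1$; consequently \emph{any} permutation of $H$ is automatically a $(1,1)$-quasi-isometry of $(H,d_1)$, and all the metric content of $\mathcal{L}$ sits in the $\mathbb{Z}$-factor. In that factor the quasimorphism inequality gives $\mu(gg')-\mu(g')\in[\mu(g)-D,\mu(g)+D]$, hence $c_\mu(g,g')\in(\mu(g)-D-1,\mu(g)+D+1)$ uniformly in $g'$; thus $\Psi^\mu(g)$ shifts the $\mathbb{Z}$-coordinate by a $g$-dependent but boundedly oscillating amount, making it a $(1,O(D))$-quasi-isometry (it is a bijection, with inverse $\Psi^\mu(g^{-1})$). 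Since $h$ is the projection to $\mathbb{Z}$, for $x=(\iota(g'_x),n_x)$ we have $h(\Psi^\mu(g)x)-h(x)=c_\mu(g,g'_x)$, so the left-hand side of \eqref{root condition} equals $c_\mu(g,g'_x)-c_\mu(g,g'_y)$, bounded by $2D+2$ independently of $x,y$ — exactly condition \eqref{root condition}. Injectivity of $g\mapsto\Psi^\mu(g)$ as maps (the sense in which $QI^h$ must be read here) is then free from faithfulness of the left regular representation, since distinct $g$ move the first coordinate differently.

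For the second assertion I would compute, for any $x=(\iota(g'),n)$, the $h$-discrepancy between the two orbits: $h(\Psi^\mu(g)x)-h(\Psi^m(g)x)=c_\mu(g,g')-c_m(g,g')=(\mu-m)(g)+O(1)$ uniformly in $x$, the $g'$- and $n$-dependence cancelling up to the bounded floor errors. Thus the two representations are equivalent in the stated sense precisely when $g\mapsto(\mu-m)(g)$ is bounded. Now I would invoke the rigidity that a bounded homogeneous quasimorphism vanishes: if $\nu=\mu-m$ is bounded then $|\nu(g)|=|\nu(g^{n})|/n\le\|\nu\|_\infty/n\to0$, so $\nu\equiv0$. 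Hence $\Psi^\mu\sim\Psi^m$ forces $\mu=m$; in particular, if $\mu$ and $m$ are linearly independent then $\mu-m\neq0$ is unbounded, the $h$-discrepancy is unbounded, and $[\Psi^\mu]\neq[\Psi^m]$. (In fact the argument separates \emph{any} two distinct homogeneous quasimorphisms, which a fortiori yields the claimed separation on the projective space.)

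The main obstacle, and the point requiring care, is the tension between the two halves of the first assertion: injectivity wants $\Psi^\mu$ faithful and hence to ``spread $\mathcal{L}$ out'', whereas membership in $QI^h$ demands that both $d$ and $h$ be coarsely preserved. These are reconciled only because the $H$-direction is metrically bounded, so the first coordinate may permute freely, and because the floor construction yields a \emph{genuine} integer cocycle whose oscillation in $g'$ is controlled by the defect of $\mu$; any naive real-valued or per-element rounded shift would spoil either the group law or the uniform bound. For the second assertion the only essential input is the standard rigidity that bounded homogeneous quasimorphisms are trivial, which turns the $h$-discrepancy computation into the desired separation.
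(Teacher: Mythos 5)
Your proof is correct, but it realizes the embedding differently from the paper. The paper chooses an integer-valued representative $\mu_0$ of $\mu$ with $\mu_0^{-1}(n)\neq\emptyset$ for all $n$, bijects each level set $\mu_0^{-1}(n)$ with the rung $H\times\{n\}$ (after arranging, with a ``without loss of generality,'' that each level set is infinite), and lets $G$ act by the transported left-regular representation; thus $\Psi^\mu$ is a bijection $G\to\mathcal{L}$ with $h\circ\Psi^\mu=\mu_0$, and condition \eqref{root condition} is inherited directly from Remark \ref{br}. You instead park the left-regular representation entirely in the $H$-direction via a fixed injection $\iota$ and drive the $\mathbb{Z}$-direction by the exact integer cocycle $c_\mu(g,g')=\lfloor\mu(gg')\rfloor-\lfloor\mu(g')\rfloor$. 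Your route buys a one-line telescoping verification of the group law and dispenses with the surjectivity and infinite-level-set adjustments of $\mu_0$; the paper's route buys a genuine bijection $G\leftrightarrow\mathcal{L}$, so that $\mu$ is read off as the $h$-value along the orbit of \emph{any} single point, which is what its proof of the second assertion (and the subsequent remarks) lean on. Note that the orbit of $(\iota(e),0)$ under your action is exactly the paper's picture with $\mu_0=\lfloor\mu\rfloor$, so the two constructions are close relatives. Two small points of care in your version: for mixed pairs with one point in the trivially-extended part $(H\setminus\iota(G))\times\mathbb{Z}$, your bound in \eqref{root condition} is $|c_\mu(g,g'_x)|\le|\mu(g)|+D+1$, i.e.\ $g$-dependent rather than the uniform $2D+2$ you state --- this is still admissible since the definition of $QI^h(\mathcal{L},d)$ lets $B$ depend on $g$, and it disappears entirely if you take $\iota$ to be a bijection onto $H$; and your $h$-discrepancy computation for the second assertion tacitly uses the same $\iota$ for $\Psi^\mu$ and $\Psi^m$, which you should fix once and for all. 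Your observation that the argument separates any two \emph{distinct} homogeneous quasimorphisms (not merely linearly independent ones) is a genuine, if mild, strengthening of what is claimed.
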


\textbf{Remark.}  It is worthwhile to emphasize what are the lemma's main points.

  1. The space $\mathcal{L}$ is quasi-isometric to $ \mathbb{Z}$, nevertheless the level sets of $h$ on  $\mathcal{L}$  plays a very important role in the embedding of $G$ above,  so they can not be discarded.

2. The lemma says that \textbf{any} quasimorphism on any countable group comes from an injection of the group into  $QI^{h}(\mathcal{L},d)$. It further tells us that two injections will be essentially the same if their quasimorphisms are.

3. Note that actually any group in  $QI^{h}(\mathcal{L},d)$, countable or not countable, for a \textit{fixed} constants in the quasi-isometry condition will cary a  quasimorphism.  By using \eqref{basiclemma} . So it means that such a group has at least as many embeddings into $QI^{h}(\mathcal{L},d)$, as the points of the projective space of its homogeneous quasimorphism space. 
For example: For  $SL_{n}( \mathbb{Z})$ for $n \geq 3$, being a boundedly  generated group by its elementary subgroups, in which every element can be considered as a commutator,   it is easily follows that  for any injection of this group to  $QI^{h}(\mathcal{L},d)$ we will have only bounded orbits. This is of course not a surprise knowing that the group has no non trivial homogeneous quasimorphisms.

\begin{proof}

Since $\mu$ is a nonzero homogeneous quasimorphism then we can choose a  quasimorphism $\mu_0: G \rightarrow \mathbb{R}$ with the following properties: (i) The homogenization of   $\mu_0 \text { is } \mu $, (ii) $ \mu^{-1}_{0}(n) \neq \emptyset \text{ } \forall n \in \mathbb{Z}$,\newline  (iii) $\mu_0(\mathbbm{1}_G)=0$ and (iv) $\mu_0$ has values only in $\mathbb{Z}$.
For such $\mu_0$ denote $\mu_0^{-1}(n):= G_n(\mu)$ which is of course  countable set. In particular we have $\coprod \limits_{n \in \mathbb{Z}} G_{n}(\mu)= G$.

Now for each $n \in \mathbb{Z}$ biject $G_{n}(\mu)$ with $H \times \{n\} \subset \mathcal{L}$.  As we will see below, we can in-fact assume without loss of generality, following the countability of $G$, that $G_{n}(\mu)$ is infinite. So we now have identified $G$ with $\mathcal{L}$. Denote by $\Psi^{\mu}$ the bijection between $G$ and $\mathcal{L}$.  We will  denoted the left action of the group on itself by $l_g$ for the action of $g$, and the induced action on $\mathcal{L}$ by $\tilde{l}_g$. Following remark \eqref{br} we know that $\mu_0$ and $G$ satisfy condition \eqref{root condition} of lemma \eqref{basiclemma} with some bound $B$. Further,  by definition the left action of $g$ on $\Psi^{\mu}(x)$ for some $x \in G$ is $\Psi^{\mu}(l_g\cdot x)$. 

By construction we have for any $x\text{, }y\text{, }g \in G$: \begin{equation}\label{seq}|\mu_0(l_g \cdot x)-\mu_0(l_g \cdot  y)|= d_2(\Psi^{\mu}(l_g \cdot x), \Psi^{\mu}(l_g \cdot y))= d_2(\tilde{l}_g \cdot \Psi^{\mu}( x),\tilde{l}_g \cdot  \Psi^{\mu}( y)) \end{equation}

Combining \eqref{seq} and \eqref{root condition} (implemented to $\mu_0$ and $G$ for the bound $B$) we get 

\begin{equation}\label{eeq} | d_2(\tilde{l}_g \cdot \Psi^{\mu}( x),\tilde{l}_g \cdot  \Psi^{\mu}( y)) - d_2(\Psi^{\mu}( x),  \Psi^{\mu}( y)) | \leq B \end{equation}

Now since the metric $d$ is bounded by 1 from $d_2$ we get $$  | d(\tilde{l}_g \cdot \Psi^{\mu}( x),\tilde{l}_g \cdot  \Psi^{\mu}( y)) - d(\Psi^{\mu}( x),  \Psi^{\mu}( y)) | \leq B+2$$

which means that $G$ acts on $(\mathcal{L},d)$ via quasi-isometries. Now,  by simply following the construction we see that actually the image of $G$ is in $QI^{h}(\mathcal{L},d)$. To see the second part of the lemma we observe that we can reconstruct $\mu$ from $\Psi^{\mu}$ by choosing any point $x \in \mathcal{L}$ and simply iterate the values of $h$ on the induced action of the group element on the chosen point (this can be seen transparently by following the construction). In particular the value does not depend on a choice of  representative (since the bounded error will vanish in the homogenization). Summing up we have an injection as required.

\end{proof}

\subsection{Family 2: Quasimorphisms that comes from monotonicity of $h$}

We now move to the next example which we give mainly for the sake of completeness of the scope of the examples. Here we use Theorem \eqref{maintheorem} a bit differently, we have the following set up.

Assume that a set $G \subseteq Aut(X)$ acts on $(X, h, A)$ (see definition \eqref{triple}) s.t.  \newline a) $ \forall g \in G , x, y \in X $,  $ h(gx) \geq h(gy) \iff  h(x) \geq h(y) $  \newline b) Elements of $G$ preserve the level sets of $h$.\newline c) The error function $b$ equals zero.   Then, 


\begin{theorem} Under the above assumptions \label{arot}$G$ is a group. The action of $G$ on the triple satisfies all the condition of Theorem \eqref{maintheorem} so lemma  \eqref{basiclemma} applied to this case gives that  $ \mu$  is a  quasimorphism on $G$. Further for the case $A= \mathbb{Z}$,  $\mu$ comes from an action on the circle.
\end{theorem}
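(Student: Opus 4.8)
The plan is to handle the three assertions in order, putting almost all the weight on the circle action, which carries the real content; the first two are bookkeeping once the right object is isolated.

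First I would record the consequence of (a) and (b). Writing $Y := \mathrm{Im}(h) \subseteq \mathbb{R}$, conditions (a) and (b) say exactly that $h(g\cdot x)$ depends only on $h(x)$ and does so monotonically: for each $g \in G$ there is a map $\phi_g : Y \to Y$ with $h(g\cdot x) = \phi_g(h(x))$ for all $x$. Level-set preservation (b) gives well-definedness, and the biconditional in (a) upgrades this to strict monotonicity, hence injectivity of $\phi_g$; surjectivity onto $Y$ follows since $g$ is a bijection of $X$, so each $\phi_g$ is an increasing bijection of $Y$. From $h(g_1 g_2 x) = \phi_{g_1}(\phi_{g_2}(h(x)))$ one reads off $\phi_{g_1 g_2} = \phi_{g_1}\circ \phi_{g_2}$, and $\phi_{\mathbbm{1}} = \mathrm{id}$. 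This yields the first assertion: conditions (a),(b) are the $\phi$-statements ``increasing bijection'', which are preserved under composition and inversion and hold for the identity, so they cut out a subgroup of $\mathrm{Aut}(X)$; thus $G$ is a group and $g \mapsto \phi_g$ is a homomorphism.

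Next I would verify the hypotheses of Theorem \ref{maintheorem}. Since $G$ acts on the triple $(X,h,A)$ it commutes with the $A=\mathbb{Z}$ action, and with $b \equiv 0$, $\rho = \mathrm{id}$ the last axiom gives $h(\alpha\cdot y) = h(y)+\alpha$ for all $y \in X$, $\alpha \in \mathbb{Z}$. Combining $h(g\alpha x) = \phi_g(h(x)+\alpha)$ with $h(\alpha g x) = \phi_g(h(x))+\alpha$ and commutativity forces the cocycle relation $\phi_g(t+\alpha) = \phi_g(t)+\alpha$, i.e. each $\phi_g$ commutes with integer translation. (If one only assumes almost-commutativity as in Remark \ref{ac}, the same computation gives this relation up to a uniform bound, which suffices below.) Consequently $\phi_g$ maps $Y \cap [0,1)$ into $[\phi_g(0), \phi_g(0)+1)$, so $h(g(F_{\mathbbm{1}})) = \phi_g(\mathrm{Im}(h(F_{\mathbbm{1}}))) \subseteq [\phi_g(0), \phi_g(0)+1)$. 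This is precisely the range hypothesis of Theorem \ref{maintheorem} with universal constant $C_0 = 1$, so that theorem (equivalently Lemma \ref{basiclemma}) applies and $\mu(g) = h(g\cdot a)-h(a)$ is a quasimorphism.

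Finally, for $A=\mathbb{Z}$ I would produce the circle action. The set $Y$ is invariant under $t \mapsto t+1$, hence projects to $\bar Y \subseteq \mathbb{R}/\mathbb{Z} =: S^1$, and the relation $\phi_g(t+1)=\phi_g(t)+1$ makes each $\phi_g$ descend to a circular-order-preserving bijection $\bar\phi_g$ of $\bar Y$; by the homomorphism property $g \mapsto \bar\phi_g$ is an action of $G$ on $\bar Y$ by circular-order automorphisms. Passing to the dynamical realization of this circular order, i.e. extending each monotone $\phi_g$ to a non-decreasing degree-one map of $\mathbb{R}$, yields a homomorphism $G \to \widetilde{\mathrm{Homeo}}^+(S^1)$ whose elements are the lifts $\tilde\phi_g$. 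Under it the canonical translation-number quasimorphism $\tilde\phi \mapsto \lim_n (\tilde\phi^{\,n}(t)-t)/n$ pulls back to $\lim_n (\phi_g^{\,n}(h(a))-h(a))/n = \mu^h(g)$, so $\mu$ (up to bounded error, hence $\mu^h$ exactly) is the pullback of the rotation-number quasimorphism; this is the asserted circle realization. The step I expect to be the main obstacle is exactly this passage from the order action on the possibly-proper subset $\bar Y \subseteq S^1$ to a genuine action on the whole circle: one must extend the monotone bijections $\phi_g$ to monotone degree-one maps of all of $\mathbb{R}$ compatibly with composition (the standard order-completion argument) and check that the translation number of the extension agrees with the limit computed directly from $\phi_g$ on $Y$, so that it truly equals $\mu^h$. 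A secondary point to pin down is how much commutativity ``acts on the triple'' supplies; if only almost-commutativity holds, the cocycle relation and range bound survive only up to a uniform constant, which still gives the quasimorphism but requires the coarse version of the realization.
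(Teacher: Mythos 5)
Your proof is correct, and it reaches the same three milestones as the paper (closure of $G$ under composition and inversion, the bound $C_0=1$, and the circle action obtained from the induced action on the image of $h$), but your route to the central estimate is genuinely different. The paper proves $h(g(F_0)) \subseteq [r+j, r+j+1]$ by contradiction: it supposes a point of $g(F_0)$ has $h$-value exceeding $r+j+1$, pulls it back by $\alpha_{\mathbbm{1}}$ to a point $b \in F_j$, argues that $b$ must then be covered by $g(F_{-\mathbbm{1}})$ rather than $g(F_0)$, and derives a contradiction with monotonicity by comparing $b$ to a point of $g(F_0)\cap F_j$ near the infimum. You instead build, at the outset, the induced strictly increasing bijection $\phi_g$ of $Y=Im(h)$ (well defined by (b), strictly monotone by the biconditional in (a)), derive the equivariance $\phi_g(t+1)=\phi_g(t)+1$ from commutativity with $A=\mathbb{Z}$ together with $b\equiv 0$ and $\rho=id$, and then read the bound off monotonicity in one line. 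This is cleaner, makes the ``trivially a group'' claim explicit via $\phi_{g_1g_2}=\phi_{g_1}\circ\phi_{g_2}$, and has the advantage that the object $\phi_g$ --- which the paper only introduces afterwards as the induced action on $\mathcal{F}=\bigcup_k h(F_k)$ --- is already in hand for the final step; from there both you and the paper conclude identically by extending to $Homeo^+_{\mathbb{Z}}(\mathbb{R})$ and identifying $\mu^h$ with the translation number, and you are appropriately candid that the extension from the proper subset $\bar{Y}\subseteq S^1$ and the agreement of translation numbers is the point requiring the standard order-completion argument (the paper simply cites it as a standard fact). One cosmetic point: your interval $[\phi_g(0),\phi_g(0)+1)$ presumes $0\in Y$; in general one should take $r=\inf\phi_g(Y\cap[0,1))$, which is all Theorem \eqref{maintheorem} requires.
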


\begin{proof}

We first remark, that it is very simple to define  $G \subseteq Aut(X)$ such that all the assumptions are kept but the preservation of the level sets of $h$,  and $G$ will not be group. This being said,  once we add the assumption then trivially we have that $G$ is a group. Still,  in order to use Theorem \eqref{maintheorem} we must show that for all $g\in G$ we have $h(g(F_0)) \subseteq [r, r+C_0]$ for some $r$,  which depends on $g$,  and $C_0$ which is not. The proof for $A=\mathbb{Z}$ and the general case is the same so we will give a proof for $A=\mathbb{Z}$. We will see that $C_0=1$. We first recall that any element of $G$ is monotone with respect to $h$ in the sense above. Let us denote $r+j := \inf \{h(g(F_0) \cap F_j)\}$ for some $0 \leq r < 1$ and some $j \in \mathbb{Z}$. So we need to show that \begin{equation}\label{ub} h(g(F_0)) \subseteq [r+j, r+j+1] \end{equation}
Assume not, let $z \in F_0 \text{ },a \in X$ be such that $g(z)=a$ and $h(a) >r+j+1$. Write\footnote{For simplicity of notation, in this proof,  we will denote by $\alpha_k$ the k-action of $k \in \mathbb{Z}$.} $\alpha_{\mathbbm{1}}(b)=a$ for $b \in F_j$. We claim that $b$ is not covered by $g(F_0)$. If so, let $x \in F_0$ such that $g(x)=b$ Applying $\alpha_{\mathbbm{1}}$ both sides we get $\alpha_{\mathbbm{1}} g(x)= \alpha_{\mathbbm{1}}(b) \Leftrightarrow g(\alpha_{\mathbbm{1}}(x))=a$ which means (recall that $g$ is a bijection) $\alpha_{\mathbbm{1}}(x) \in F_\mathbbm{1}$ is the pre-image of $a$. This is a contradiction. Thus $b$ is not covered by $F_0$. 

So $b$ must be covered by $F_{-\mathbbm{1}}$. On the other hand $h(b)>r+j$. This means that there is an element say $k$
in $g(F_0)\cap F_j$ such that  $r+j \leq h(k) <h(b)$. But $k$ came from $F_0$, while $b$ is covered from $F_{-\mathbbm{1}}$. This contradicts the monotonicity of $g$ with respect to $h$. Thus \eqref{ub} is proved, which is the main condition of Theorem \eqref{maintheorem}.

To show that our  quasimorphism comes from an action on the circle we argue in a standard way. Let us denote by $\mathcal{F}_k := h(F_k)$ and define the subset $\mathcal{F} := \bigcup_{k \in \mathbb{Z}} \mathcal{F}_k \subseteq \mathbb{R}$. Since $G$ preserve the level sets there is an induced action  of $G$ on $  \mathcal{F}$, and further since, by assumption the bounded error function $b$ is zero it follows that there is a translation action of $\mathbb{Z}$ action. Lastly the action of $G$ on $\mathcal{F}$ is monotone. It is a standard fact that such an action extends to the whole real line. We thus get that the image of $G$ is in $Homeo^+_\mathbb{Z}(\mathbb{R})$, the group of monotone orientation preserving homeomorphisms of the real line. Now noting that the constructed homogeneous quasimorphism comes from iterating the values of $h(x_0)$ with respect to the action of the group element we get that  this is exactly the translation number defined on $Homeo^+_\mathbb{Z}(\mathbb{R})$, in particular the  quasimorphism  comes form an action on the circle.

\end{proof}



\section{Diffeomorphism Groups and Discussion} 

We would like to state the following remarks relating to subsequent research and diffeomorphism groups. 


As for diffeomorphism groups we propose the following set-up.

Assume $X=  \coprod  \limits_{n \in \mathbb {Z} }F_{n} \subseteq \mathbb{R}^{k+1}$ is a smooth $k$ manifold, path connected, unbounded.  In particular,  carries an induced Riemannian metric, so we have a notion of length of paths, and volume on $X$. Assume also that the closure
 of $F_{n}$ intersect only the closure of  $F_{n-1}$ and $F_{n+1}$ and the action of $\mathbb{Z}$ on $X$ is via isometries (recall that the action of $n$,  $ \alpha_{n}$,   satisfies that $ \alpha_{n} : F_{0} \xrightarrow {\cong} F_{n}$). Further we choose $U$ to be an open submanifold of $F_0$ which exhaust the (finite) volume of $F_0$ up to a given constant and has the same (finite) diameter as $F_0$. Finally,  choose $U$ such that it has a smooth boundary.

 Now let  $G' \subseteq Diff_\mathbb{Z}(X)$,  $h_{0}: X \rightarrow \mathbb{R}^{\geq 0}$ be a smooth, say, function s.t.  for all $g \in G'$  and fixed $0< \varepsilon$

 \begin{equation} | \int \limits_{g(F_{0})}h_{0}dm | \leq B\end{equation} 
 
 \begin{equation} \varepsilon < \inf \limits_{x \in Int(g(U)} \sup \{vol(B_{r}(x)) | \widebar {B_{r}(x)} \subseteq \widebar {g(U)}, B_{r}(x) \text { } is \text { } an\text{ } open \text { }ball  \text { } \underline{contains} \text { }x \}  \end{equation}

For example: $h_{0} $ can be taken to be periodic with respect to the  $ F_{n}'s $
 so the elements of $G'$ respect the period up to some error where the simplest example is when $h_{0}$ is just a constant function. 
 
 We define $h: X \rightarrow \mathbb{R}$ as follows: we choose a reference point $x_{0} \in U$ and define 
\begin{equation} h(x)=\pm \inf \limits_{\gamma} \{ | \int \limits_{\gamma} h_{0} dx | ;   \gamma  \text{ is a path which connects } x_{0} \text{ } to \text{ } x \} \end{equation}
 where $dx$ is  length element of the metric and the sign is determined to be compatible with the labeling of the fundamental domains. For "simple enough" fundamental domain $F_0$ such an $h$ will make $(X, h, \mathbb{Z})$ into a triple in the sense of \eqref{triple}. Finally, note  that the set of groups contained in $G'$ which satisfy the above conditions is not empty, since the translations group is such a group. Even further in some cases, as we saw in Example 1 of the previous section, we have a family of such groups. Thus by Zorn's lemma we have at least one maximal group $G$ with the conditions above. For such $G$ we have:

  \begin{theorem} 
 
  For all $g \in G $ we have $h(g(U)) \subseteq [r, r+ C_{0}] $ where $C_{0}$ depends only on 
 $h$ and $\varepsilon$ and we iterate only the points of $U$. Thus the formula $ \mu (g)= h(g \cdot x_{0})-h(x_{0})$ defines a  quasimorphism on 
$G$ and $ \mu ^{h}$ the homogenization of $\mu$  a non zero homogeneous quasimorphism on $G$.

\end{theorem}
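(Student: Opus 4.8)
The genuine content of the statement is the geometric estimate $h(g(U)) \subseteq [r, r+C_0]$ with $C_0$ independent of $g$; once this is in hand the rest is formal, since this is exactly the hypothesis needed to run Theorem~\ref{maintheorem} and Lemma~\ref{basiclemma}. The strategy is to turn the two hypotheses on $g$ — the $h_0$-mass bound $\int_{g(F_0)} h_0\,dm \le B$ and the $\varepsilon$-thickness condition — into a bound on how far $h$ can spread across $g(U)$. The bridge between them is the observation that $h$ is, up to sign, the distance function of the conformally weighted length element $h_0\,dx$; hence $h$ is locally Lipschitz and satisfies the eikonal identity $|\nabla h| = h_0$ almost everywhere.

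I would then run a coarea estimate on $g(U)$. Using $g(U)\subseteq g(F_0)$, $h_0\ge 0$, and $|\nabla h| = h_0$ a.e., the coarea formula gives
\[
\int_{\mathbb R}\mathcal H^{k-1}\!\big(g(U)\cap h^{-1}(t)\big)\,dt \;=\; \int_{g(U)}|\nabla h|\,dm \;=\; \int_{g(U)} h_0\,dm \;\le\; B.
\]
Since $g(U)$ is open and (for a simple enough $F_0$) connected, $I := h(g(U))$ is an interval, and the plan is to bound the slice measure from below: to produce $c=c(\varepsilon)>0$ with $\mathcal H^{k-1}(g(U)\cap h^{-1}(t)) \ge c$ for $t$ interior to $I$. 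Granting this, the displayed inequality forces $c\cdot\mathrm{length}(I) \le B$, i.e. $h(g(U)) \subseteq [r, r+C_0]$ with $C_0 = B/c(\varepsilon)$, which depends only on $\varepsilon$ and on $h$ (through $h_0$, $B$ and the fixed geometry of $X$).

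For the slice bound I would exploit the bounded geometry of $X$: because the isometric $\mathbb Z$-action has a compact fundamental domain $F_0$ of finite diameter and volume, a ball of volume $>\varepsilon$ has radius bounded below by some $\delta_0=\delta_0(\varepsilon)>0$. Thus every interior point of $g(U)$ lies in a ball $B^\ast$ with $\overline{B^\ast}\subseteq\overline{g(U)}$, radius $\ge\delta_0$ and $\mathrm{vol}(B^\ast)>\varepsilon$. For interior $t$, a path in $g(U)$ from a point with $h<t$ to one with $h>t$ meets $h^{-1}(t)$ at an interior point $z$; applying a relative isoperimetric inequality in the thick ball $B^\ast\ni z$ turns ``$h^{-1}(t)$ separates a definite volume inside $B^\ast$'' into $\mathcal H^{k-1}(B^\ast\cap h^{-1}(t)) \ge c(\varepsilon)$, and a fortiori the same bound for the slice in $g(U)$. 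This step is where I expect the main obstacle to sit: the constant $c(\varepsilon)$ must be uniform over all $g\in G$, and near the extreme values of $h$ on $g(U)$ the level set can merely graze $B^\ast$, so the isoperimetric estimate degenerates; handling this will likely require discarding a bounded band of near-extreme levels (harmless up to an additive constant in $C_0$) and tracking the dependence of the isoperimetric constant on $\varepsilon$ and on the geometry of $X$.

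Finally I would assemble the pieces. With the diameter bound in force, $U$ plays the role of the effective fundamental patch (Remark~\ref{ac}(2)), so condition~\eqref{root condition} holds for the $G$-action and Lemma~\ref{basiclemma}, applied to $x_0\in U$ with iteration inside $U$, shows that $\mu(g)=h(g\cdot x_0)-h(x_0)$ is a quasimorphism. For nonvanishing of the homogenization, the $\mathbb Z$-translations $\alpha_n$ satisfy all the hypotheses (they are isometries preserving the $h_0$-mass and carrying $U$ to an isometric, equally thick copy), so by maximality they lie in $G$; the action is effective and $h(\alpha_1^{\,n}\cdot x_0)=h(x_0)+n+O(1)\to\pm\infty$ as $n\to\pm\infty$, so the unboundedness clause of Lemma~\ref{basiclemma} yields $\mu^h(\alpha_1)=1\neq 0$, i.e. $\mu^h$ is a nonzero homogeneous quasimorphism.
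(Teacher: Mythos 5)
First, a point of order: the paper does not actually prove this theorem. It appears in the final ``Diffeomorphism Groups and Discussion'' section as part of an explicitly proposed/outlined construction, and no proof is supplied, so there is no argument of record to compare yours against; I can only judge your proposal on its own merits. Its architecture --- coarea formula to bound $\int_{\mathbb R}\mathcal H^{k-1}(g(U)\cap h^{-1}(t))\,dt$ by $B$, then a uniform lower bound $c(\varepsilon)$ on the slices to force $\mathrm{length}(h(g(U)))\le B/c(\varepsilon)$ --- is the natural formalization of the mechanism the hypotheses are clearly designed for, and the first half is sound (you only need $|\nabla h|\le h_0$ a.e., which follows from $h$ being $1$-Lipschitz for the weighted length element, together with $g(U)\subseteq g(F_0)$ and $h_0\ge 0$).

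The genuine gap is the slice lower bound, and it is larger than the one caveat you flag. The $\varepsilon$-condition only guarantees that each interior point $x$ of $g(U)$ is \emph{contained in} some ball of volume $>\varepsilon$ with closure in $\overline{g(U)}$; the ball need not be centered at $x$. So for a point $z\in h^{-1}(t)$ the level set may cut off an arbitrarily small cap of \emph{every} admissible thick ball through $z$, and this can happen for $t$ well inside $h(g(U))$, not only near its endpoints --- so discarding a bounded band of extreme levels does not repair it. The alternative of applying a relative isoperimetric inequality in $g(U)$ itself fails because the isoperimetric constant of $g(U)$ is exactly the quantity that degenerates as $g$ stretches $U$, i.e.\ it is not uniform in $g$. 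What is needed is an argument producing, for each interior $t$, \emph{one} thick ball that $h^{-1}(t)$ genuinely bisects (e.g.\ a continuity/chaining argument along a path from the sublevel to the superlevel region, using bounded geometry of $X$ to replace volume-$\varepsilon$ balls by balls of controlled radius $\delta_0(\varepsilon)$ and controlled $h$-oscillation); you have not supplied this, and it is the entire content of the theorem. Separately, your final paragraph treats the passage from $h(g(U))\subseteq[r,r+C_0]$ to the quasimorphism property as formal via Theorem \ref{maintheorem}, but that theorem needs control of $h$ on $g(F_\alpha)$ for all $\alpha$, whereas here one only controls $h$ on $g(U)$ and its $\mathbb Z$-translates; since the orbit $g_2\cdot x_0$ need not lie in $\bigcup_n\alpha_n(U)$, the estimate for $h(g_1g_2\cdot x_0)$ does not follow verbatim. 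That issue is arguably present in the paper's own formulation (the clause ``we iterate only the points of $U$''), but a complete proof would have to address it rather than cite Theorem \ref{maintheorem} and Lemma \ref{basiclemma} as if the hypotheses were literally met.
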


The disadvantage here,  is that we do not know what are the properties  of $G$ and how much  interesting it is,  being determined in a crude way. 

\textbf{2.}  The basic picture that stems from lemma \eqref{qil},  according to \cite{Ha},  seems to relate to \cite{Ma} where the results there seem to relate to at least one aspect of implications of the lemma. In \cite{Ma},  quasimorphisms are indeed studied on finitely presented groups via, more or less, quasi-isometric action on trees. It seems that by using lemma \eqref{qil} as a starting point,  and results of \cite{BS-H4}, 
new interpretation to \cite{Ma} can be given. An important point is that this work can also be used to study similar ideas where we consider countable groups into diffeomorphism groups using quasi isometries as carriers of data on quasimorphisms on the groups. I should mention that partial motivation could come from \cite{Polt} and mentioned works therein.


\

 \end{document}